\newtheorem{theorem}{Theorem}[section]
\newtheorem{lemma}[theorem]{Lemma}
\newtheorem{example}{Example}[section]
\newtheorem{remark}{Remark}[section]
\begin{document}
\begin{frontmatter}
\title{Involutive solutions of  the Yang-Baxter-like matrix equation -- theory and algorithms}

\author[AS]{Alicja Smoktunowicz \corref{cor1}}
\ead{A.Smoktunowicz@mini.pw.edu.pl}

\author[RRA]{Ryszard  R. Andruszkiewicz   \corref{cor1}}
\ead{randrusz@math.uwb.edu.pl}

\cortext[cor1]{Principal corresponding author}
\cortext[cor2]{Corresponding author}
\address[AS]{ Faculty of Mathematics and Information Science, Warsaw University of Technology, Koszykowa 75, 00-662 Warsaw, Poland}
\address[RRA]{Department  of Mathematics, University of Bia{\l}ystok, 15-245  Bia{\l}ystok,  Cio{\l}kowskiego 1 M, Poland} 

\begin{abstract} 
We find all explicit involutive solutions $X  \in \mathbb C^{n \times n}$ of the Yang-Baxter-like matrix equation $AXA=XAX$, 
where $A  \in \mathbb C^{n \times n}$ is a given involutory matrix. The construction is algorithmic. 
\end{abstract}

\begin{keyword}
Involution \sep  Yang-Baxter equation \sep quadratic matrix \sep eigenvalues

\MSC[2010] 15A24 \sep 15A18\sep 15A30
\end{keyword}
\end{frontmatter}

\section{Introduction}
\label{introduction} 
We recall that $A \in  \mathbb C^{n \times n}$ is an involutory matrix (involution) if $A^2=I_n$, where $I_n$ is the $n \times n$ identity matrix.
Let $\mathcal{I}_n$ denote  the set of  all involutory matrices of size $n$.
The purpose of this paper is to find all explicit involutive solutions $X$  of  the Yang-Baxter-like  matrix equation $AXA=XAX$, where $A$ is involution.
This problem is related to  the quantum Yang-Baxter equation (QYBE). We say that $Z \in \mathbb R^\mathrm{m^2 \times m^2}$  satisfies the QYBE  if 
\begin{equation}\label{Eqs1}
 (I_m  \otimes Z)    (Z \otimes I_m)   (I_m \otimes Z)=(Z \otimes I_m)    (I_m \otimes Z)   (Z \otimes I_m)  
\end{equation}
where $B \otimes C$ denotes  the Kronecker product (tensor product) of the matrices $B$ and $C$:
$B \otimes  C= (b_{i,j} C)$. That is, the  Kronecker product $B \otimes  C$ is a block matrix whose $(i,j)$ blocks are $b_{i,j} C$.
The quantum Yang-Baxter equation has been extensively studied due to its application in many fields of mathematics and physics. 
Notice that if $Z$ is involution and satisfies  (\ref{Eqs1}),  then  $A=I_m  \otimes Z$ and $X=Z \otimes I_m$  are involutions as well,  and $AXA=XAX$. 
Many techniques for construction of  involutive solutions of the QYBE are developed, see \cite{Tomasz}--\cite{Cedo} and \cite{Agatka1}--\cite{Agatka2}. 
Our research is also inspired  by the papers \cite{huang}-\cite{ding3}, where the  general problem of finding all explicit solutions of  $AXA=XAX$ for given involutory $A$ 
is studied. In this paper, we restrict our attention to involutory matrices $X$.  It allows  us to develop efficient methods 
for generating concrete involutive solutions of the Yang-Baxter-like matrix equation, see Section 7.

For given  $A  \in   \mathcal{I}_n$  we define the set $\mathcal{S}_n(A)$ of all involutive solutions of the Yang-Baxter matrix equation:
\[
\mathcal{S}_n(A)= \{X  \in   \mathcal{I}_n: AXA=XAX\}.
\]

First we present basic properties of the set $\mathcal{S}_n(A)$, which are easy to check.
\begin{lemma}\label{lemacik1}
Let $A \in  \mathcal{I}_n$. Then we have 
\begin{description}
\item[(a)] $A \in \mathcal{S}_n(A)$,
\item[(b)]   if  $X \in \mathcal{S}_n(A)$ then $-X \in \mathcal{S}_n(-A)$,
\item[(c)] if   $X \in \mathcal{S}_n(A)$ then $A$ and $X$ are similar; we have $X=(AX) A (AX)^{-1}$, 
\item[(d)] if   $P \in  \mathbb C^{n \times n}$ is nonsingular, then $X \in \mathcal{S}_n(A)$ iff  $P^{-1} X P  \in \mathcal{S}_n(P^{-1}A P)$,
\item[(e)] if $A=\pm I_n$ then $\mathcal{S}_n(A)=\{ A \}$.
\end{description}
\end{lemma}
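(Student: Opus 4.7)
The plan is to dispatch each of the five items in turn; none requires substantial ingenuity, but part (c) is the one that benefits most from having the right bookkeeping set up in advance.

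For (a) I would simply substitute $X=A$ into the Yang--Baxter-like equation: both $AXA$ and $XAX$ collapse to $A^{3}$, and $A\in\mathcal I_n$ is given. For (b) the sign tracking is immediate: $(-X)^{2}=X^{2}=I_n$, and both $(-A)(-X)(-A)$ and $(-X)(-A)(-X)$ equal the respective sides of $AXA=XAX$ up to a common sign.

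For (c), the key preliminary observation is that if $X\in\mathcal S_n(A)$ then both $A$ and $X$ are involutory, hence invertible with $A^{-1}=A$ and $X^{-1}=X$; in particular $AX$ is invertible with $(AX)^{-1}=XA$. I would then multiply the identity $AXA=XAX$ on the right by $XA=(AX)^{-1}$ to obtain $AXA\cdot XA = XAX\cdot XA$. The left-hand side is by definition $(AX)A(AX)^{-1}$; the right-hand side simplifies to $X$ using $X^{2}=A^{2}=I_n$. This gives the claimed conjugation and, a fortiori, the similarity of $A$ and $X$.

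For (d) I would note that $(P^{-1}XP)^{2}=P^{-1}X^{2}P=I_n$ iff $X^{2}=I_n$, and likewise for $A$, so the involutory hypotheses transfer across the similarity in both directions; meanwhile $(P^{-1}AP)(P^{-1}XP)(P^{-1}AP)=P^{-1}(AXA)P$ and analogously for $XAX$, so the Yang--Baxter-like identity is preserved as well. Finally, for (e), if $A=\varepsilon I_n$ with $\varepsilon=\pm 1$, then $AXA=\varepsilon^{2}X=X$ while $XAX=\varepsilon X^{2}=\varepsilon I_n$, so $X=\varepsilon I_n=A$ is forced.

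I do not expect a genuine obstacle anywhere; the only place one could stumble is in (c), where one must remember to invoke invertibility of $AX$ (which requires $X$, not just $A$, to be involutory) before rearranging, and to simplify $XAX\cdot XA$ using $X^{2}=A^{2}=I_n$ in the right order.
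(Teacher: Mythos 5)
Your proposal is correct, and since the paper omits the proof entirely (stating only that these properties ``are easy to check''), your verification supplies exactly the routine computations the authors had in mind; in particular your handling of (c) --- right-multiplying $AXA=XAX$ by $(AX)^{-1}=XA$ and simplifying via $X^2=A^2=I_n$ --- is the intended derivation of $X=(AX)A(AX)^{-1}$. No gaps.
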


Assume that $A \in  \mathbb C^{n \times n}$ is a given involution. Then $A$ is diagonalizable. According to Lemma \ref{lemacik1} (b), 
there is no loss of generality in assuming  that $A$ is not equal to $\pm I_n$, and there exists a nonsingular matrix $P \in  \mathbb C^{n \times n}$ such that 
\begin{equation}\label{eqs1}
A=PDP^{-1}, \quad D=\text{diag}(I_p, -I_{n-p}), \quad 1 \leq p < n, \quad n \leq 2 p.
\end{equation}
Then we have  $\mathcal{S}_n(A)=\{PYP^{-1}: Y \in  \mathcal{S}_n(D)\}$. 
Now natural questions to ask about $\mathcal{S}_n(D)$ are: How many $Y \in \mathcal{S}_n(D)$ are there? and, how to find them? 
In general, the Yang-Baxter-like  matrix equation has infinitely many solutions, see Example \ref{Example1}.

\medskip

\section{Identities  for involutive solutions of $DYD=YDY$}

We would like  to find   $Y \in \mathcal{S}_n(D)$, where $D$ is defined by  (\ref{eqs1}). Partition $Y$ conformally with $D$ as
\begin{equation}\label{eqs2}
Y =\left(
\begin{array}{cc}
 Y_1   &  Y_2    \\
 Y_3 &   Y_4    \\
\end{array}\right), 
\quad Y_1(p \times p).
\end{equation}

From Lemma \ref{lemacik1} it follows that  $D$ and $Y$ are similar, so $tr Y= tr D$, where  $tr Y$ denotes the trace of $Y$. 
This together with (\ref{eqs1})-(\ref{eqs2}) gives
\begin{equation}\label{eqs11a}
tr Y_1 + tr Y_4 = 2p-n \ge 0.
\end{equation}

\begin{lemma}\label{lemma1}
 Let $D$ be given by  (\ref{eqs1})  and $Y$ be partitioned as  in  (\ref{eqs2}). 
Then $DYD=YDY$  iff
\begin{equation}\label{eqs3}
Y_1^2-Y_1= Y_2 Y_3,
\end{equation}
\begin{equation}\label{eqs4}
Y_4^2 + Y_4=Y_3 Y_2
\end{equation}
\begin{equation}\label{eqs5}
(Y_1+I_p) Y_2= Y_2 Y_4,
\end{equation}
\begin{equation}\label{eqs6}
Y_3 (Y_1+I_p)= Y_4 Y_3.
\end{equation}
\end{lemma}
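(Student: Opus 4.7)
The plan is purely computational: compute both sides of $DYD = YDY$ in $2\times 2$ block form and match the four blocks.

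First I would write
\[
D = \begin{pmatrix} I_p & 0 \\ 0 & -I_{n-p} \end{pmatrix},
\qquad
Y = \begin{pmatrix} Y_1 & Y_2 \\ Y_3 & Y_4 \end{pmatrix},
\]
and observe that left-multiplying by $D$ negates the bottom block-row, while right-multiplying by $D$ negates the right block-column. Thus
\[
DYD = \begin{pmatrix} Y_1 & -Y_2 \\ -Y_3 & Y_4 \end{pmatrix},
\qquad
YD = \begin{pmatrix} Y_1 & -Y_2 \\ Y_3 & -Y_4 \end{pmatrix}.
\]

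Next I would multiply $YD$ by $Y$ on the right using the standard block product, obtaining
\[
YDY = \begin{pmatrix} Y_1^2 - Y_2 Y_3 & Y_1 Y_2 - Y_2 Y_4 \\ Y_3 Y_1 - Y_4 Y_3 & Y_3 Y_2 - Y_4^2 \end{pmatrix}.
\]
Equating the four blocks of $DYD$ and $YDY$ yields, respectively: $Y_1 = Y_1^2 - Y_2 Y_3$, which rearranges to (\ref{eqs3}); $Y_4 = Y_3 Y_2 - Y_4^2$, which rearranges to (\ref{eqs4}); $-Y_2 = Y_1 Y_2 - Y_2 Y_4$, which rearranges to (\ref{eqs5}); and $-Y_3 = Y_3 Y_1 - Y_4 Y_3$, which rearranges to (\ref{eqs6}). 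Since each step is an equivalence (rearrangement of an identity between matrices of matching size), the equivalence $DYD = YDY \iff (\ref{eqs3})\text{--}(\ref{eqs6})$ follows.

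There is essentially no obstacle here: the lemma is a bookkeeping statement translating one matrix identity into four block identities. The only thing to be careful about is the sign pattern introduced by $D$ (it is tempting to drop a minus sign when moving terms across the equality), so I would double-check each of the four sign conversions from $DYD$'s block entries to the stated equations.
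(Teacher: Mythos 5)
Your proposal is correct and is exactly the paper's argument: the paper likewise computes $DYD=\left(\begin{smallmatrix} Y_1 & -Y_2 \\ -Y_3 & Y_4\end{smallmatrix}\right)$ and $YDY=\left(\begin{smallmatrix} Y_1^2-Y_2Y_3 & Y_1Y_2-Y_2Y_4 \\ Y_3Y_1-Y_4Y_3 & Y_3Y_2-Y_4^2\end{smallmatrix}\right)$ and equates blocks. All four sign conversions in your block-by-block matching check out.
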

\begin{proof}
Compare the blocks of the following matrices:
\[
DYD=\left(
\begin{array}{cc}
 Y_1   &  -Y_2    \\
 -Y_3 &   Y_4    \\
\end{array}\right), 
\quad 
YDY =\left(
\begin{array}{cc}
 Y_1^2 -Y_2 Y_3  &  Y_1 Y_2- Y_2 Y_4    \\
 Y_3 Y_1- Y_4 Y_3 & Y_3 Y_2-  Y_4^2 \\
\end{array}\right).
\]
\end{proof}

\begin{lemma}\label{lemma2} Let $Y$ be partitioned as in  (\ref{eqs2}).  Then $Y$ is an involution  iff 
\begin{equation}\label{eqs7}
 I_p-Y_1 ^2= Y_2 Y_3,
\end{equation}
\begin{equation}\label{eqs8}
 I_{n-p}- Y_4^2= Y_3 Y_2,
\end{equation}
\begin{equation}\label{eqs9}
Y_1 Y_2=  -Y_2 Y_4,
\end{equation}
\begin{equation}\label{eqs10}
 Y_3 Y_1 =-Y_4 Y_3.
\end{equation}
\end{lemma}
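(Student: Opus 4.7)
The plan is to mimic the proof of Lemma \ref{lemma1} exactly, since the statement $Y^2 = I_n$ is, block by block, equivalent to a system of four matrix identities in the $Y_i$'s.

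First I would compute $Y^2$ using the block decomposition of $Y$ given in (\ref{eqs2}). Straightforward multiplication yields
\[
Y^2 = \left(
\begin{array}{cc}
 Y_1^2 + Y_2 Y_3   &  Y_1 Y_2 + Y_2 Y_4    \\
 Y_3 Y_1 + Y_4 Y_3 &   Y_3 Y_2 + Y_4^2    \\
\end{array}\right).
\]
Since $D$ in (\ref{eqs1}) is partitioned as $\mathrm{diag}(I_p, -I_{n-p})$, the identity matrix $I_n$ splits conformally with $Y$ as $\mathrm{diag}(I_p, I_{n-p})$. Then $Y$ is an involution, i.e.\ $Y^2 = I_n$, if and only if the four block entries on the right-hand side above agree with the corresponding blocks of $I_n$.

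Comparing the $(1,1)$ and $(2,2)$ diagonal blocks with $I_p$ and $I_{n-p}$ rearranges to (\ref{eqs7}) and (\ref{eqs8}) respectively, while comparing the off-diagonal blocks $(1,2)$ and $(2,1)$ with the zero blocks rearranges to (\ref{eqs9}) and (\ref{eqs10}). Both directions of the equivalence are contained in this one block-by-block comparison, so the result follows immediately.

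There is no genuine obstacle here: the lemma is a purely computational unpacking of $Y^2 = I_n$ into block form, directly parallel to how Lemma \ref{lemma1} unpacks $DYD = YDY$. The only point to be careful about is keeping the signs and the order of the factors correct in the $(1,2)$ and $(2,1)$ blocks, so that the four identities emerge in the form stated in (\ref{eqs7})--(\ref{eqs10}).
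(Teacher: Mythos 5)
Your proposal is correct and is essentially identical to the paper's proof: both compute $Y^2$ block by block and compare the result with $\mathrm{diag}(I_p, I_{n-p})$ to read off (\ref{eqs7})--(\ref{eqs10}). Nothing is missing.
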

\begin{proof}
It is evident that  $Y^2=I_n$ holds  iff 
\[
Y^2=\left(
\begin{array}{cc}
 Y_1 ^2+Y_2 Y_3  &  Y_1 Y_2 + Y_2 Y_4   \\
 Y_3 Y_1 + Y_4 Y_3 &   Y_3 Y_2 +Y_4^2   \\
\end{array}\right)
=\left(
\begin{array}{cc}
 I_p  &  0   \\
 0 & I_{n-p}\\
\end{array}\right).
\]
\end{proof}

\begin{lemma}\label{lemma3}
 Let (\ref{eqs1})-(\ref{eqs11a})  hold.  Assume that $Y \in  \mathcal{S}_n(D)$.
Then $Y_{1}$ and $Y_{4}$  (called  the quadratic matrices) satisfy the equations
\begin{equation}\label{eqs11}
(2Y_1 +I_p) (Y_1-I_{p})=0, \quad (2Y_4 - I_{n-p})(Y_4+ I_{n-p})=0.
\end{equation}
Moreover, we have
\begin{equation}\label{eqs12}
(2Y_1+I_p)Y_2=0, \quad Y_3 (2Y_1+I_p)=0,
\end{equation}
\begin{equation}\label{eqs13}
I_p-Y_1=2 Y_2 Y_3, \quad  I_{n-p}+ Y_4= 2 Y_3 Y_2. 
\end{equation}
\end{lemma}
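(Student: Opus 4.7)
The plan is to derive everything in Lemma \ref{lemma3} by adding and subtracting the four equations of Lemma \ref{lemma1} with their counterparts from Lemma \ref{lemma2}; no clever tricks should be required. Since $Y\in \mathcal{S}_n(D)$ means $Y$ is both involutive and a solution of $DYD=YDY$, all eight identities (\ref{eqs3})--(\ref{eqs10}) are simultaneously available, and the assertions of Lemma \ref{lemma3} are essentially their term-by-term consequences.

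First, I would obtain (\ref{eqs11}) by equating the two expressions for $Y_2Y_3$ and $Y_3Y_2$. Adding (\ref{eqs3}) and (\ref{eqs7}) gives $2Y_1^2-Y_1-I_p=0$, which factors as $(2Y_1+I_p)(Y_1-I_p)=0$; adding (\ref{eqs4}) and (\ref{eqs8}) analogously produces $2Y_4^2+Y_4-I_{n-p}=0$, i.e.\ $(2Y_4-I_{n-p})(Y_4+I_{n-p})=0$.

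Next, for (\ref{eqs12}) I would add (\ref{eqs5}) to (\ref{eqs9}): the right-hand sides $Y_2Y_4$ and $-Y_2Y_4$ cancel, leaving $(2Y_1+I_p)Y_2=0$. Symmetrically, adding (\ref{eqs6}) and (\ref{eqs10}) makes $Y_4Y_3$ cancel, yielding $Y_3(2Y_1+I_p)=0$.

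Finally, for (\ref{eqs13}) I would use (\ref{eqs11}) rewritten as $2Y_1^2=Y_1+I_p$ and $2Y_4^2=I_{n-p}-Y_4$. Substituting into twice (\ref{eqs3}) and twice (\ref{eqs4}) respectively gives $2Y_2Y_3=2Y_1^2-2Y_1=I_p-Y_1$ and $2Y_3Y_2=2Y_4^2+2Y_4=I_{n-p}+Y_4$. The whole argument is linear combinations of Lemmas \ref{lemma1} and \ref{lemma2}, so I do not anticipate any genuine obstacle; the only thing to watch is to avoid left/right multiplications (since $Y_1$ need not commute with $Y_2,Y_3$), which the additive combinations above sidestep completely.
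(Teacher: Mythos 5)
Your proof is correct and follows essentially the same route as the paper: each of (\ref{eqs11})--(\ref{eqs13}) is obtained by pairing the block identities of Lemma \ref{lemma1} with their counterparts in Lemma \ref{lemma2}. The only cosmetic slip is that for (\ref{eqs11}) you say ``adding'' (\ref{eqs3}) and (\ref{eqs7}) when you mean equating their right-hand sides (i.e.\ subtracting) --- the displayed conclusion $2Y_1^2-Y_1-I_p=0$ is the correct one, and the rest goes through exactly as in the paper.
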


\begin{proof}
From (\ref{eqs3}) and  (\ref{eqs7}) we get  $Y_1^2-Y_1=I_p-Y_1^2$.
Similarly, from (\ref{eqs4}) and  (\ref{eqs8}) we obtain $Y_4^2+Y_4=I_{n-p}-Y_4^2$.
The above equations are equivalent to (\ref{eqs11}). From (\ref{eqs5})  and  (\ref{eqs9}), and  from (\ref{eqs6})  and  (\ref{eqs10}) we get (\ref{eqs12}).
Note that (\ref{eqs3}) together with (\ref{eqs7}), and (\ref{eqs4}) together with (\ref{eqs8}), lead to (\ref{eqs13}).
 \end{proof}

\begin{theorem}\label{TheoremA}
Under the hypotheses  of Lemma \ref{lemma3},    $Y \in \mathcal{S}_n(D)$ iff $Y$ satisfies (\ref{eqs11})-(\ref{eqs13}).
\end{theorem}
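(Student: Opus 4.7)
The forward implication of the theorem is exactly the content of Lemma \ref{lemma3}, so the work lies in the converse: assuming (\ref{eqs11})--(\ref{eqs13}), I must recover all eight identities of Lemmas \ref{lemma1} and \ref{lemma2}, namely (\ref{eqs3})--(\ref{eqs10}).

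The easy half is extracting (\ref{eqs3}),(\ref{eqs4}),(\ref{eqs7}),(\ref{eqs8}). From (\ref{eqs11}) I get the polynomial identities $Y_1^2 = \tfrac12(Y_1 + I_p)$ and $Y_4^2 = \tfrac12(I_{n-p} - Y_4)$. Combining each with the matching line of (\ref{eqs13}) immediately gives both $Y_1^2 - Y_1 = Y_2 Y_3 = I_p - Y_1^2$ and $Y_4^2 + Y_4 = Y_3 Y_2 = I_{n-p} - Y_4^2$, which is (\ref{eqs3}),(\ref{eqs7}) and (\ref{eqs4}),(\ref{eqs8}).

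The harder half is the four "cross" relations (\ref{eqs5}),(\ref{eqs6}),(\ref{eqs9}),(\ref{eqs10}). Observe that (\ref{eqs12}) gives $Y_1 Y_2 = -\tfrac12 Y_2$ and $Y_3 Y_1 = -\tfrac12 Y_3$, so each of the two pairs $\{(\ref{eqs5}),(\ref{eqs9})\}$ and $\{(\ref{eqs6}),(\ref{eqs10})\}$ collapses to a single unknown equation: one must verify $Y_2 Y_4 = \tfrac12 Y_2$ and $Y_4 Y_3 = \tfrac12 Y_3$, equivalently
\[
Y_2(2 Y_4 - I_{n-p}) = 0, \qquad (2 Y_4 - I_{n-p}) Y_3 = 0.
\]
This is the step I expect to be the main obstacle, because (\ref{eqs12}) only constrains $Y_1$-action on $Y_2,Y_3$, and nothing in the hypotheses speaks directly about $Y_4$-action on $Y_2,Y_3$. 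The trick is to use (\ref{eqs13}) to convert the $Y_4$ factor into a $Y_3 Y_2$ factor: since $Y_4 = 2 Y_3 Y_2 - I_{n-p}$, one computes
\[
Y_2 (2 Y_4 - I_{n-p}) = 4 Y_2 Y_3 Y_2 - 3 Y_2 = (4 Y_2 Y_3 - 3 I_p) Y_2,
\]
and then substitutes $2 Y_2 Y_3 = I_p - Y_1$ from (\ref{eqs13}) to reduce the bracket to $-(2 Y_1 + I_p)$. Applying (\ref{eqs12}) finishes it; the symmetric computation on the right handles $(2 Y_4 - I_{n-p}) Y_3$.

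With all of (\ref{eqs3})--(\ref{eqs10}) established, Lemma \ref{lemma2} gives $Y^2 = I_n$ and Lemma \ref{lemma1} gives $DYD = YDY$, so $Y \in \mathcal{S}_n(D)$, completing the converse.
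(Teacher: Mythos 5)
Your proof is correct and follows the same route the paper intends: the paper's proof consists only of the remark that ``standard calculations'' show (\ref{eqs3})--(\ref{eqs10}) are equivalent to (\ref{eqs11})--(\ref{eqs13}), and you carry out exactly that equivalence, with the forward direction delegated to Lemma \ref{lemma3}. Your treatment of the only nontrivial point --- deriving $Y_2(2Y_4-I_{n-p})=0$ and $(2Y_4-I_{n-p})Y_3=0$ by substituting $Y_4=2Y_3Y_2-I_{n-p}$ and $2Y_2Y_3=I_p-Y_1$ from (\ref{eqs13}) and then invoking (\ref{eqs12}) --- is sound and supplies the detail the paper leaves implicit.
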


\begin{proof}
Standard calculations show that  the conditions (\ref{eqs3})-(\ref{eqs10}) are equivalent to  (\ref{eqs11})-(\ref{eqs13}).
\end{proof}

\medskip

In computing concrete solutions of the equation $DYD=YDY$ the following Lemma \ref{twier2a} may be useful.

\begin{lemma}\label{twier2a}
Let $D$ be defined by  (\ref{eqs1}), and  $W=\text{diag}(W_1,W_2)   \in  \mathbb C^{n \times n}$ be arbitrary nonsingular matrix,  where $W_1(p \times p)$.  
Assume that $Y  \in \mathcal{S}_n(D)$ and define $\hat Y= W^{-1} Y W$.
The we have
\begin{description}
\item[(i)]  $\hat Y  \in \mathcal{S}_n(D)$,

\item[(ii)] if we partition $\hat Y $  conformally with $Y$ as follows
\begin{equation}
\hat Y  =\left(\label{hatY}
\begin{array}{cc}
 \hat Y_1   &  \hat Y_2    \\
 \hat Y_3 &   \hat Y_4    \\
\end{array}\right)=
\left(
\begin{array}{cc}
 W_1^{-1} Y_1 W_1  &  W_1^{-1}  Y_2   W_2 \\
W_2^{-1} Y_3 W_1 & W_2^{-1} Y_4 W_2    \\
\end{array}\right),
\end{equation}
then we get the identities 
\begin{equation}\label{new_eqs12}
(2 \hat Y_1+I_p) \hat Y_2=0, \quad \hat Y_3 (2 \hat Y_1+I_p)=0, \quad I_p-\hat Y_1=2 \hat Y_2  \hat Y_3
\end{equation}
and
\begin{equation}\label{new_eqs13}
 I_{n-p}+ \hat Y_4= 2 \hat Y_3 \hat Y_2. 
\end{equation}
\end{description}
\end{lemma}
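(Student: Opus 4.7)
The plan is to observe that the lemma is essentially an immediate corollary of Lemma \ref{lemacik1}(d) combined with Lemma \ref{lemma3}, once one exploits the compatibility between the block structure of $W$ and that of $D$.

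First, for part (i), I would note that $W=\text{diag}(W_1,W_2)$ commutes with $D=\text{diag}(I_p,-I_{n-p})$, because both are block diagonal with conformal block sizes and the diagonal blocks of $D$ are scalar multiples of identity matrices. Hence $W^{-1}DW=D$. Applying Lemma \ref{lemacik1}(d) with $P=W$ gives $\hat Y = W^{-1}YW \in \mathcal{S}_n(W^{-1}DW)=\mathcal{S}_n(D)$.

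Next, for the block decomposition in (\ref{hatY}), I would simply carry out the block multiplication
\[
W^{-1}YW=\begin{pmatrix} W_1^{-1} & 0\\ 0 & W_2^{-1}\end{pmatrix}\begin{pmatrix} Y_1 & Y_2\\ Y_3 & Y_4\end{pmatrix}\begin{pmatrix} W_1 & 0\\ 0 & W_2\end{pmatrix},
\]
which yields exactly the claimed expressions for $\hat Y_1,\hat Y_2,\hat Y_3,\hat Y_4$.

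Finally, for the identities (\ref{new_eqs12}) and (\ref{new_eqs13}), I would apply Lemma \ref{lemma3} directly to $\hat Y$ in place of $Y$: since $\hat Y \in \mathcal{S}_n(D)$ by part (i) and is partitioned conformally with $D$, the relations (\ref{eqs12}) and the second identity of (\ref{eqs13}) yield exactly (\ref{new_eqs12}) and (\ref{new_eqs13}) with hats. There is no real obstacle here; the whole point of the lemma is to record the block-conjugation invariance of $\mathcal{S}_n(D)$ under transformations that preserve $D$, so that one can simplify the individual blocks $Y_1,\ldots,Y_4$ independently when searching for explicit solutions.
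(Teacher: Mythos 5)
Your proposal is correct and follows essentially the same route as the paper: both use Lemma \ref{lemacik1}(d) together with the observation that $W^{-1}DW=D$ for block-diagonal $W$ to get part (i), and then apply Lemma \ref{lemma3} to $\hat Y$ for part (ii). Your write-up is in fact slightly more explicit than the paper's, since you spell out why $W$ commutes with $D$ and carry out the block multiplication for (\ref{hatY}).
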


\begin{proof}
From Lemma \ref{lemacik1}  it follows that $Y  \in \mathcal{S}_n(D)$ iff  $\hat Y= W^{-1}Y W  \in  \mathcal{S}_n({W}^{-1}DW)$.
However, since $W$ is a block diagonal matrix, we obtain the identity $W^{-1} D W=D$, so  $\hat Y  \in \mathcal{S}_n(D)$.

To prove the  part (ii) of Lemma \ref{twier2a}, we apply  Lemma \ref{lemma3} to $\hat Y$ using only the fact that $\hat  Y  \in \mathcal{S}_n(D)$.
This completes the proof.
\end{proof}

\begin{remark}\label{remarkAla1}
It is obvious  that if $W$ is an arbitrary nonsingular matrix then $W^{-1} D W=D$ holds only for  block diagonal matrix  $W=\text{diag}(W_1,W_2)$, where $W_1(p \times p)$.    

Note that if there are two matrices $P$ and $Q$ such that $A=PDP^{-1}$ and $A=QDQ^{-1}$, where $D$ is defined by (\ref{eqs1}), then 
$W=P^{-1}Q$ is a block diagonal matrix. This together with  Lemma \ref{twier2a} leads to 
\[
\mathcal{S}_n(A)=\{PYP^{-1}:  Y \in \mathcal{S}_n(D)\}= \{QYQ^{-1}:  Y \in \mathcal{S}_n(D)\}.
\]  
\end{remark}

\medskip

\begin{remark}\label{remarkA1}
Notice that $Y_1$ and $Y_4$ satisfying (\ref{eqs11})  are nonsingular because each eigenvalue of $Y_1$  is either $1$ or $-\frac{1}{2}$, and each eigenvalue of  $Y_4$  
is either $-1$ or $\frac{1}{2}$. Trivial solutions of (\ref{eqs11}) are: $Y_1=I_p$, $Y_1=-\textstyle\frac{1}{2} I_p$, and
 $Y_4=-I_{n-p}$, $Y_4=\textstyle \frac{1}{2} I_{n-p}$. In order to  characterize other matrices $Y_{1}$ and $Y_{4}$ satisfying (\ref{eqs11}),  
we need some properties of  quadratic matrices.  
\end{remark}

\medskip

\section{Quadratic matrices}
\label{quadraticm}

We recall  that  $A \in  \mathbb C^{n \times n}$ is a quadratic matrix,  if there exist $\alpha, \beta \in \mathbb C$ such that $(A-\alpha I_n) (A-\beta I_n)=0$. 
For the convenience of the reader we repeat the relevant material from \cite{Marek}. 

\begin{theorem}[\cite{Marek}]
\label{twier1}
Let $(A-\alpha I_n) (A-\beta I_n)=0$, where $\alpha, \beta \in \mathbb C$.  Assume that  $A \neq \alpha I_n, \beta I_n$. 
\begin{enumerate}
\item Then  there exist a unitary matrix $U  \in   \mathbb C^{n \times n}$ and  triangular $R \in   \mathbb C^{n \times n}$  such that $A=URU^{*}$ (the Schur form), where
\begin{equation}\label{R}
R=\left(
\begin{array}{cc}
\alpha  I_k  &  G    \\
 0 &   \beta I_{n-k}    \\
\end{array}\right),
\quad 1 \leq k < n.
\end{equation}     

\item Each eigenvalue of $A$ is either $\alpha$ or $\beta$.

\item If $\alpha \neq \beta$ then $A$ is diagonalizable, and can be written as  $A=PDP^{-1}$, where $P=UW$, and $W$ is involution containing the eigenvectors of $R$, i.e. 
 $RW=WD$, where 
\begin{equation}
W=\left(
\begin{array}{cc}
I_k  & \textstyle\frac{G}{\alpha-\beta}  \\
 0 &   -I_{n-k}    \\
\end{array}\right), 
\quad
D =\left(
\begin{array}{cc}
 \alpha I_k  &  0    \\
 0 &   \beta I_{n-k}    \\
\end{array}\right).
\end{equation}
\end{enumerate}
\end{theorem}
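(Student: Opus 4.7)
The plan is to treat parts 2, 1, 3 in that logical order, since the first part draws on the eigenvalue characterization from the second.

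For part 2 I would take any eigenvector $v$ with $Av=\lambda v$ and apply the hypothesis directly: $0 = (A-\alpha I_n)(A-\beta I_n)v = (\lambda-\alpha)(\lambda-\beta)v$, and since $v\neq 0$ we must have $\lambda\in\{\alpha,\beta\}$.

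For part 1 I would start from any Schur decomposition $A=URU^{*}$; by part 2 the diagonal of $R$ lies in $\{\alpha,\beta\}$. I would then invoke the classical fact that the diagonal entries of a Schur form can be reordered by unitary similarity (swapping adjacent pairs via $2\times 2$ unitary conjugations) to obtain a $U$ for which the first $k$ diagonal entries of $R$ equal $\alpha$ and the last $n-k$ equal $\beta$. Writing $R = \begin{pmatrix} R_{11} & G \\ 0 & R_{22} \end{pmatrix}$ with $R_{11}$ and $R_{22}$ upper triangular, the identity $(A-\alpha I_n)(A-\beta I_n)=0$ transfers through $U$ to $(R-\alpha I_n)(R-\beta I_n)=0$, and the diagonal blocks of this product give $(R_{11}-\alpha I_k)(R_{11}-\beta I_k)=0$ and $(R_{22}-\alpha I_{n-k})(R_{22}-\beta I_{n-k})=0$. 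When $\alpha\neq\beta$, the matrix $R_{11}-\beta I_k$ is upper triangular with all diagonal entries $\alpha-\beta\neq 0$, hence invertible, forcing $R_{11}=\alpha I_k$; symmetrically $R_{22}=\beta I_{n-k}$. The degenerate case $\alpha=\beta$ I would handle separately: then $N=A-\alpha I_n$ satisfies $N^2=0$, so $\mathrm{Im}(N)\subseteq\ker(N)$, and choosing an orthonormal basis whose first $\dim\ker(N)$ vectors span $\ker(N)$ produces the desired block form directly. The assumption $A\neq\alpha I_n,\beta I_n$ excludes $k=0$ and $k=n$, yielding $1\leq k<n$.

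For part 3 I would carry out two short block computations. First, $W^{2}=I_n$, because its off-diagonal block is $G/(\alpha-\beta)+(G/(\alpha-\beta))\cdot(-I_{n-k})=0$, while the diagonal blocks give $I_k$ and $I_{n-k}$. Second, both $RW$ and $WD$ reduce to $\begin{pmatrix}\alpha I_k & \beta G/(\alpha-\beta)\\ 0 & -\beta I_{n-k}\end{pmatrix}$, using $\alpha/(\alpha-\beta)-1=\beta/(\alpha-\beta)$ for the top-right block of $RW$. Hence $R=WDW^{-1}$, and therefore $A=URU^{*}=(UW)D(UW)^{-1}=PDP^{-1}$ with $P=UW$, exhibiting $A$ as diagonalizable.

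The main obstacle is the reordering in part 1: upgrading the generic Schur form, in which the $\alpha$'s and $\beta$'s may be interleaved along the diagonal, to one where all $\alpha$'s appear first requires the standard but nontrivial swapping procedure for Schur decompositions. Once the grouping is in place, the block-multiplication arguments that force $R_{11}=\alpha I_k$ and $R_{22}=\beta I_{n-k}$, together with the verifications in part 3, are entirely mechanical.
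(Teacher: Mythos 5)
Your proof is correct and complete. Note that the paper itself offers no proof of this theorem: it is quoted verbatim from the cited reference \cite{Marek} ``for the convenience of the reader,'' so there is no in-paper argument to compare against. Your route --- eigenvalue localization from the annihilating polynomial, reordering of the Schur form so the $\alpha$'s precede the $\beta$'s, the block computation forcing $R_{11}=\alpha I_k$ and $R_{22}=\beta I_{n-k}$ via invertibility of the triangular factors, the separate nilpotent treatment of $\alpha=\beta$ through $\mathrm{Im}(N)\subseteq\ker(N)$, and the direct verification that $RW=WD$ with $W^2=I_n$ --- is the standard one and all steps check out, including the observation that $A\neq\alpha I_n,\beta I_n$ forces both $\alpha$ and $\beta$ to occur as eigenvalues (otherwise one of $A-\alpha I_n$, $A-\beta I_n$ would be invertible and the other zero), which is what guarantees $1\leq k<n$.
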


\medskip

\begin{lemma}\label{lemacik} Let the quadratic matrices $Y_1$ and $Y_4$  satisfy (\ref{eqs11}), where $n \leq 2p$.  
Assume that  $Y_1 \neq I_p, -\textstyle\frac{1}{2} I_p$ and $Y_4 \neq -I_{n-p}, \textstyle\frac{1}{2} I_{n-p}$.
Then there exist nonsingular matrices $P_1$ and   $P_4$ such that 
\begin{equation}\label{D1}
Y_1=P_1 D_1 P_1^{-1}, \quad D_1 =\text{diag}(-\textstyle\frac{1}{2} I_r, I_{p-r}),   
\quad 1\leq r < p
\end{equation}
and 
\begin{equation}\label{D4}
Y_4=P_4 D_4 P_4^{-1}, \quad D_4=\text{diag}(\textstyle\frac{1}{2} I_s, -I_{n-p-s}), 
\quad  1 \leq s < n-p \leq p.
\end{equation}
\end{lemma}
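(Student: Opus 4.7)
The plan is to recognize each of $Y_1$ and $Y_4$ as a quadratic matrix with two \emph{distinct} spectral values and invoke Theorem~\ref{twier1}(3) directly.

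First, I would rewrite (\ref{eqs11}) in the $(Y-\alpha I)(Y-\beta I)=0$ form. For $Y_1$ the equation is $(Y_1-\alpha I_p)(Y_1-\beta I_p)=0$ with $\alpha=-\tfrac{1}{2}$, $\beta=1$; and for $Y_4$ we take $\alpha=\tfrac{1}{2}$, $\beta=-1$. In both cases $\alpha\ne\beta$, and by hypothesis $Y_1\notin\{\alpha I_p,\beta I_p\}$ and $Y_4\notin\{\alpha I_{n-p},\beta I_{n-p}\}$, so the assumption ``$A\neq \alpha I_n,\beta I_n$'' of Theorem~\ref{twier1} is satisfied.

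Next, I would apply Theorem~\ref{twier1}(3) to each matrix separately. Applied to $Y_1$, it yields a nonsingular $P_1$ with $Y_1=P_1 D_1 P_1^{-1}$, where $D_1=\text{diag}(-\tfrac{1}{2}I_k,I_{p-k})$ for some $1\le k<p$; renaming $k=r$ gives exactly (\ref{D1}). Here the strict inequalities $1\le r$ and $r<p$ come from part~2 of Theorem~\ref{twier1} combined with the hypothesis that neither eigenspace is all of $\mathbb{C}^p$ (otherwise $Y_1$ would equal $-\tfrac{1}{2}I_p$ or $I_p$). The same argument applied to $Y_4$ produces (\ref{D4}) with $1\le s<n-p$. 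Finally, the inequality $n-p\le p$ recorded in (\ref{D4}) is merely a restatement of the standing assumption $n\le 2p$ from (\ref{eqs1}).

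Essentially no real obstacle remains: the cited theorem does all the work. The only mildly subtle point is making sure the block order in $D_1$ and $D_4$ matches the form stated in the lemma (the $-\tfrac{1}{2}$-block first for $Y_1$, the $\tfrac{1}{2}$-block first for $Y_4$), which is automatic from how we label $\alpha$ and $\beta$ before invoking Theorem~\ref{twier1}; if needed, one conjugates by a block permutation matrix absorbed into $P_1$ or $P_4$.
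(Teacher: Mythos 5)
Your proposal is correct and is essentially the paper's own argument: the paper's proof is the one-line remark that the lemma ``follows from Lemma \ref{lemma3} and Theorem \ref{twier1},'' and you have simply spelled out the details of applying Theorem \ref{twier1}(3) to $Y_1$ with $\alpha=-\tfrac{1}{2},\ \beta=1$ and to $Y_4$ with $\alpha=\tfrac{1}{2},\ \beta=-1$. The extra care you take with the strict bounds on $r$ and $s$ and with the block ordering is exactly the bookkeeping the paper leaves implicit.
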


\begin{proof}
 It follows from  Lemma \ref{lemma3} and  Theorem \ref{twier1}.
 \end{proof}

\begin{remark}\label{remarkA2}
We see that  all possible cases for $Y_1$ and $Y_4$ are:  $Y_1=I_p$ or $Y_1=-\textstyle\frac{1}{2} I_p$, or $Y_1$  satisfies (\ref{D1}).
Similarly, $Y_4$ is equal to $-I_{n-p}$ or $\textstyle\frac{1}{2} I_{n-p}$, or $Y_4$   satisfies (\ref{D4}). We consider all these cases. 
\end{remark}

\medskip

\section{Solutions of $DYD=YDY$ for  $Y_1=I_p$}

\begin{theorem}\label{twier3}
 Let (\ref{eqs1})-(\ref{eqs11a})  hold. 
If $Y_1=I_p$ or $Y_4=-I_{n-p}$ then $Y=D$ is the only solution of the quadratic equation $YDY=DYD$.
\end{theorem}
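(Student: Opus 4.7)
The plan is to invoke Lemma \ref{lemma3} and Theorem \ref{TheoremA}, which characterise $Y \in \mathcal{S}_n(D)$ via the block identities (\ref{eqs11})--(\ref{eqs13}), and then to force every block of $Y$ to its value in $D$.

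First I would dispatch the case $Y_1 = I_p$. Substituting into $(2Y_1 + I_p)Y_2 = 0$ from (\ref{eqs12}) gives $3Y_2 = 0$, so $Y_2 = 0$; the companion identity $Y_3(2Y_1 + I_p) = 0$ yields $Y_3 = 0$ in the same way. The second part of (\ref{eqs13}) then reads $I_{n-p} + Y_4 = 2 Y_3 Y_2 = 0$, forcing $Y_4 = -I_{n-p}$. Thus $Y = D$.

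For the case $Y_4 = -I_{n-p}$, I would reduce to the previous case by a trace argument. By Lemma \ref{lemacik1}(c), $Y$ is similar to $D$, so $\mathrm{tr}\,Y = 2p - n$; combined with $\mathrm{tr}\,Y_4 = -(n - p)$, the relation (\ref{eqs11a}) forces $\mathrm{tr}\,Y_1 = p$. But each eigenvalue of $Y_1$ lies in $\{1,-\tfrac{1}{2}\}$ by (\ref{eqs11}), and $Y_1$ is diagonalisable since its annihilating polynomial $(2x+1)(x-1)$ has simple roots; hence having $p$ eigenvalues summing to $p$ forces them all to equal $1$, whence $Y_1 = I_p$, and we are back in the previous case.

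No step presents a serious obstacle: once the identities (\ref{eqs11})--(\ref{eqs13}) are available, specifying $Y_1$ or $Y_4$ propagates immediately to all four blocks. The only mildly non-trivial ingredient is the trace reduction, which converts the hypothesis on $Y_4$ into the hypothesis $Y_1 = I_p$ so that the factor $2Y_1 + I_p$ becomes the invertible matrix $3 I_p$.
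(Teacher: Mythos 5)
Your proof is correct and follows essentially the same route as the paper: the case $Y_1=I_p$ is handled identically via (\ref{eqs12}) and (\ref{eqs13}), and the case $Y_4=-I_{n-p}$ is reduced to the first by showing $\mathrm{tr}\,Y_1=p$ and hence $Y_1=I_p$. The only (immaterial) difference is that you obtain $\mathrm{tr}\,Y_1=p$ directly from (\ref{eqs11a}), whereas the paper gets it by taking traces in $I_p-Y_1=2Y_2Y_3$ together with $\mathrm{tr}(Y_3Y_2)=0$; you are also more explicit than the paper about why $\mathrm{tr}\,Y_1=p$ forces $Y_1=I_p$ (eigenvalues in $\{1,-\tfrac12\}$ plus diagonalizability), which is a welcome addition.
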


\begin{proof}
Let $Y_1=I_p$. Then from (\ref{eqs12}) it follows that $Y_2=0$ and $Y_3=0$. This together with (\ref{eqs13}) leads to $Y_4=-I_{n-p}$,
hence $Y=D$. \\

Now assume that $Y_4=-I_{n-p}$. Then from (\ref{eqs13}) we have $Y_3 Y_2=0$. Since  $tr  (Y_3 Y_2)=0$ and $tr (2 Y_3 Y_2)= tr (2 Y_2 Y_3)= tr (I_p-Y_1)=p-tr Y_1$, 
we get $tr Y_1=p$, so $Y_1=I_p$. This together with (\ref{eqs12}) leads to $Y_2=0$ and  $Y_3=0$, hence $Y=D$.
\end{proof}

\medskip

\section{Solutions of $DYD=YDY$ for  $Y_1=-\textstyle\frac{1}{2} I_p$}

\begin{theorem}\label{Twier4}
 Let (\ref{eqs1})-(\ref{eqs11a})  hold and $Y  \in \mathcal{S}_n(D)$.  Assume that  $Y_1=-\textstyle\frac{1}{2} I_p$. Then  $n=2p$, $Y_4=\textstyle\frac{1}{2} I_p$,  
and  solutions of $YDY=DYD$ are
\begin{equation}\label{solution1}
Y=\left(
\begin{array}{cc}
 -\textstyle\frac{1}{2} I_p & Y_2\\
\textstyle\frac{3}{4} Y_2^{-1}  & \textstyle\frac{1}{2} I_p\\
\end{array}\right),
\end{equation}
where $Y_2(p \times p)$ is an arbitrary nonsingular matrix.
\end{theorem}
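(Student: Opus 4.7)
The plan is to derive everything directly from the four identities in Lemma \ref{lemma3} under the assumption $Y_{1}=-\textstyle\frac{1}{2}I_{p}$, and then verify that the resulting formula does yield an element of $\mathcal{S}_{n}(D)$.

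First I would exploit the fact that $Y_{1}=-\textstyle\frac{1}{2}I_{p}$ makes $2Y_{1}+I_{p}=0$, so the two relations in (\ref{eqs12}) hold trivially and give no constraint on $Y_{2},Y_{3}$. The first equation of (\ref{eqs13}) then reads $I_{p}-Y_{1}=\tfrac{3}{2}I_{p}=2Y_{2}Y_{3}$, so
\[
Y_{2}Y_{3}=\textstyle\frac{3}{4}I_{p}.
\]
Since $Y_{2}$ has size $p\times(n-p)$ and $Y_{3}$ has size $(n-p)\times p$, the product $Y_{2}Y_{3}$ has rank at most $n-p$. But $\tfrac{3}{4}I_{p}$ has rank $p$, so $n-p\ge p$, that is $n\ge 2p$. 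Combined with the standing assumption $n\le 2p$ from (\ref{eqs1}), this forces $n=2p$.

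With $n=2p$ the blocks $Y_{2},Y_{3}$ are square of size $p$, and $Y_{2}Y_{3}=\tfrac{3}{4}I_{p}$ implies that $Y_{2}$ is nonsingular and $Y_{3}=\tfrac{3}{4}Y_{2}^{-1}$. Plugging this into the second identity of (\ref{eqs13}) gives
\[
I_{p}+Y_{4}=2Y_{3}Y_{2}=2\cdot\textstyle\frac{3}{4}I_{p}=\textstyle\frac{3}{2}I_{p},
\]
so $Y_{4}=\tfrac{1}{2}I_{p}$. This shows that every solution with $Y_{1}=-\tfrac{1}{2}I_{p}$ has the form (\ref{solution1}).

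Conversely, to prove that every $Y$ of the form (\ref{solution1}) with arbitrary nonsingular $Y_{2}$ lies in $\mathcal{S}_{n}(D)$, I would invoke Theorem \ref{TheoremA} and check (\ref{eqs11})--(\ref{eqs13}) directly: (\ref{eqs11}) follows from $2Y_{1}+I_{p}=0$ and $2Y_{4}-I_{p}=0$; (\ref{eqs12}) is automatic for the same reason; and (\ref{eqs13}) reduces to the two identities $Y_{2}Y_{3}=Y_{3}Y_{2}=\tfrac{3}{4}I_{p}$, which are immediate from $Y_{3}=\tfrac{3}{4}Y_{2}^{-1}$. I do not expect any serious obstacle; the only step that requires a moment's thought is the rank/dimension argument forcing $n=2p$, but this is immediate once the equation $Y_{2}Y_{3}=\tfrac{3}{4}I_{p}$ is written down.
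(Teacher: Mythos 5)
Your proposal is correct, and it takes a genuinely different and in fact more economical route than the paper. The paper first rules out $Y_4=-I_{n-p}$ via Theorem \ref{twier3}, then eliminates the case where $Y_4$ satisfies (\ref{D4}) by combining the classification of quadratic matrices (Remark \ref{remarkA2}) with the trace identity (\ref{eqs11a}), concluding $Y_4=\textstyle\frac{1}{2}I_{n-p}$ by exhaustion of cases; only then does a second application of (\ref{eqs11a}) yield $n=2p$. You instead bypass the spectral case analysis entirely: from the first identity of (\ref{eqs13}) you get $Y_2Y_3=\textstyle\frac{3}{4}I_p$, and the observation that a product of a $p\times(n-p)$ matrix with an $(n-p)\times p$ matrix has rank at most $n-p$ immediately forces $n\ge 2p$, hence $n=2p$; the value $Y_4=\textstyle\frac{1}{2}I_p$ then falls out of the second identity of (\ref{eqs13}) rather than having to be isolated as the surviving case. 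Your argument needs neither the trace relation (\ref{eqs11a}) nor the quadratic-matrix machinery of Section 3, and it delivers $Y_4$ constructively; what the paper's longer route buys is uniformity, since the same trace-plus-classification template is reused verbatim in Theorems \ref{Theorem_i} and \ref{Theorem_ii}. Your verification of the converse via Theorem \ref{TheoremA} matches the paper's (terse) closing remark and is complete.
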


\begin{proof} From Theorem \ref{twier3} it follows that $Y_4 \neq -I_{n-p}$. Assume now that $Y_4 \neq  \textstyle\frac{1}{2} I_{n-p}$. By Remark \ref{remarkA2}, 
$Y_4$   should satisfy (\ref{D4}). From (\ref{D4}) it follows that $tr Y_4=\textstyle\frac{s}{2}-(n-p-s)$, where  $1 \leq s< n-p$. This together with (\ref{eqs1}) gives $s<p$. Therefore, 
$tr Y_1+tr Y_4=\textstyle\frac{s-p}{2}-(n-p-s)$, so from (\ref{eqs11a}) it follows that $tr Y_1+ tr Y_4= 2p-n$ if and only if $s=p$, a contradiction. 
We conclude that $Y_4 = \textstyle\frac{1}{2} I_{n-p}$.  Then $tr Y_1+ tr Y_4= -\textstyle\frac{p}{2}+\textstyle\frac{n-p}{2}=\textstyle\frac{n-2p}{2}$. This together with (\ref{eqs11a}) gives $n=2p$. 
From (\ref{eqs13}) we get $Y_2Y_3=\textstyle\frac{3}{4}I_p$. Therefore,  $Y_2$ and $Y_3$ are nonsingular, so $Y_3=\textstyle\frac{3}{4} Y_2^{-1}$. Clearly, $Y$ has a form (\ref{solution1}). 
It is easy to check that such $Y$ satisfies the equation $YDY=YDY$.
\end{proof}

\medskip

\section{Solutions of $DYD=YDY$ for  $Y_1$ satisfying   (\ref{D1}) }

In this section we are going to  study  two remaining cases: $Y_4=\textstyle\frac{1}{2} I_{n-p}$, or $Y_4$ satisfies   (\ref{D4}).
Throught this section we assume that $Y_1$ satisfies  (\ref{D1}). We apply Lemma to $W=\text{diag}(P_1,W_2)$, where $W_2$ is a nonsingular matrix.
Then from  (\ref{hatY}) and  (\ref{D1}) we get that $\hat Y_1=D_1= \text{diag}(-\textstyle\frac{1}{2} I_r, I_{p-r})$ and we have
\begin{equation}\label{newD1}
2 D_1 +I_p= 3 \, \text{diag}(0, I_{p-r}), \quad I_p-D_1=\textstyle \frac{3}{2}\,  \text{diag}(I_r,0).
\end{equation}

Now we partition $\hat Y_2$ and $\hat Y_3$ as follows
 \begin{equation}\label{hatY23}
\hat Y_2=\left(
\begin{array}{c}
 B_2  \\
 C_2   \\
\end{array}\right),  \quad B_2(r \times (n-p)), 
\quad 
\hat Y_3=(B_3, C_3), \quad B_3((n-p) \times r).
\end{equation}

Then  from (\ref{new_eqs12}) it follows that 
\begin{equation}\label{C2C3}
C_2=0, \quad C_3=0, \quad B_2 B_3=\textstyle \frac{3}{4}\,  I_r.
\end{equation}

\medskip

\begin{theorem}\label{Theorem_i}  Let (\ref{eqs1})-(\ref{eqs11a})  hold.  
Let $Y_1$ satisfy  (\ref{D1}) and  $Y_4= \textstyle\frac{1}{2} I_{n-p}$. Then $r=n-p$, where 
$1\leq r <p$.  Moreover, 
$Y  \in \mathcal{S}_n(D)$ iff $Y=W \hat Y W^{-1}$, where $W=diag(P_1, I_{n-p})$ and 
\begin{equation}\label{Solution2}
\hat Y=\left(
\begin{array}{cc|c}
 -\textstyle\frac{1}{2}I_{n-p} & 0 & B_2\\
0 & I_{2p-n} & 0 \\
\hline
\textstyle\frac{3}{4} B_2^{-1} & 0 & \textstyle\frac{1}{2}I_{n-p} \\
\end{array}\right),
\end{equation}
where $B_2(n-p) \times (n-p))$ is an arbitrary nonsingular matrix. 
\end{theorem}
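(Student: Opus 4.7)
The plan is to reduce the problem via Lemma \ref{twier2a} with the block diagonal conjugator $W=\text{diag}(P_1,I_{n-p})$, so that $\hat Y_1=D_1$ and $\hat Y_4=\tfrac12 I_{n-p}$, and then read off structural information block by block using the identities (\ref{new_eqs12})--(\ref{new_eqs13}) that are already established in the excerpt. In particular, the block description (\ref{C2C3}) of $\hat Y_2$ and $\hat Y_3$ (namely $C_2=0$, $C_3=0$, $B_2B_3=\tfrac34 I_r$) is already available; I only need the analogue coming from the other side.

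Next I would plug $\hat Y_4=\tfrac12 I_{n-p}$ into (\ref{new_eqs13}) and use the partition (\ref{hatY23}) with $C_2=0=C_3$ to compute $2\hat Y_3\hat Y_2=2(B_3,0)\binom{B_2}{0}=2B_3B_2$, obtaining $B_3B_2=\tfrac34 I_{n-p}$. Combined with $B_2B_3=\tfrac34 I_r$ from (\ref{C2C3}), this is the crux of the theorem: $B_2$ is an $r\times(n-p)$ matrix admitting both a right and a left inverse (up to the factor $\tfrac34$), hence $\operatorname{rank} B_2 \ge r$ and $\operatorname{rank} B_2\ge n-p$, forcing $r=n-p$ and $B_2$ square and nonsingular. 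The relation $B_3=\tfrac34 B_2^{-1}$ then follows, and assembling $\hat Y$ from the pieces $\hat Y_1=D_1=\text{diag}(-\tfrac12 I_{n-p},I_{2p-n})$, $\hat Y_2=\binom{B_2}{0}$, $\hat Y_3=(\tfrac34 B_2^{-1},0)$, $\hat Y_4=\tfrac12 I_{n-p}$, yields precisely the form (\ref{Solution2}). The constraint $1\le r<p$ from (\ref{D1}) translates to $p<n<2p$, which is consistent with (\ref{eqs1}).

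For the converse, I would invoke Lemma \ref{twier2a}(i): $Y=W\hat Y W^{-1}\in\mathcal{S}_n(D)$ iff $\hat Y\in\mathcal{S}_n(D)$, so it suffices to verify directly that $\hat Y$ defined by (\ref{Solution2}) satisfies the conditions (\ref{eqs11})--(\ref{eqs13}) of Theorem \ref{TheoremA}. This is a routine block-matrix check using $B_2\cdot\tfrac34 B_2^{-1}=\tfrac34 I_{n-p}=\tfrac34 B_2^{-1}\cdot B_2$, and each identity reduces to an equality between block diagonal matrices.

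The only nontrivial step is the rank/dimension argument that forces $r=n-p$; everything else is algebraic bookkeeping already prepared by the identities (\ref{newD1})--(\ref{C2C3}) and Theorem \ref{TheoremA}. I would present the rank argument explicitly rather than rely on Sylvester-type identities, to keep the proof self-contained.
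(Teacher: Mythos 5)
Your proposal is correct, and it reaches the key conclusion $r=n-p$ by a different mechanism than the paper. The paper's proof uses the trace identity (\ref{eqs11a}): it computes $tr\,\hat Y_1+tr\,\hat Y_4=-\frac{r}{2}+(p-r)+\frac{n-p}{2}$, sets this equal to $2p-n$, and solves for $r$; only afterwards does it deduce that $B_2$ and $B_3$ are nonsingular from $B_2B_3=\frac34 I_r$ with $r=n-p$. You instead extract $B_3B_2=\frac34 I_{n-p}$ from (\ref{new_eqs13}) with $\hat Y_4=\frac12 I_{n-p}$, pair it with $B_2B_3=\frac34 I_r$ from (\ref{C2C3}), and conclude $r=n-p$ from the fact that a rectangular matrix with both a left and a right inverse must be square; this delivers $r=n-p$, the invertibility of $B_2$, and $B_3=\frac34 B_2^{-1}$ in one stroke. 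Your route is self-contained within the block identities of Lemma \ref{twier2a} and does not invoke the similarity/trace fact (\ref{eqs11a}) at all, at the cost of one extra line of rank bookkeeping; the paper's trace computation is marginally shorter but leans on an external ingredient. Your explicit verification of the converse via Theorem \ref{TheoremA} is also more careful than the paper's terse ``Clearly, (\ref{new_eqs13}) also holds.'' (Incidentally, the paper's proof states $B_3=B_2^{-1}$ where it should read $B_3=\frac34 B_2^{-1}$, consistent with (\ref{Solution2}); your version has the correct constant.)
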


\begin{proof}
Here $ \hat Y_1=D_1$ and $\hat Y_4=Y_4$, where $D_1$ is defined by (\ref{D1}). Then 
$tr Y_1 +tr Y_4= tr \hat Y_1 +tr \hat Y_4  = - \textstyle\frac{r}{2}+(p-r)+ \textstyle\frac{n-p}{2}$, so by (\ref{eqs11a}) we get $r=n-p$.
From this and  (\ref{hatY23})-(\ref{C2C3}) it follows that $B_2$ and $B_3$ are nonsingular matrices. By (\ref{C2C3}), we get $B_3=B_2^{-1}$. Clearly, (\ref{new_eqs13}) also holds. 
\end{proof}

\medskip

\begin{theorem}\label{Theorem_ii}  Let (\ref{eqs1})-(\ref{eqs11a})  hold.  
Let $Y_1$ satisfy  (\ref{D1}) and  $Y_4$ satisfy (\ref{D4}). Then $s=r$, where $1 \leq r < n-p \leq p$. Moreover,  
$Y  \in \mathcal{S}_n(D)$ iff $Y=W \hat Y W^{-1}$, where $W=diag(P_1,P_4)$ and 
\begin{equation}\label{solution22}
\hat Y=\left(
\begin{array}{cc|cc}
 -\textstyle\frac{1}{2}I_r & 0 & F_1 & 0\\
0 & I_{p-r} & 0 & 0\\
\hline
\textstyle\frac{3}{4} F_1^{-1} & 0 & \textstyle\frac{1}{2}I_r & 0\\
0 & 0 & 0 & -I_{n-p-r}\\
\end{array}\right),
\end{equation}
where $F_1(r \times r)$ is an arbitrary nonsingular matrix. 
\end{theorem}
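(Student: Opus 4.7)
The plan is to mirror the approach of Theorem~\ref{Theorem_i}, but with a finer block partition that respects both the block structure of $D_1$ and of $D_4$. First I would compute the trace constraint: since $\hat Y_1 = D_1$ and $\hat Y_4 = D_4$ (after conjugating by $W = \text{diag}(P_1,P_4)$, using Lemma~\ref{twier2a}), we have $\text{tr}\,Y_1 + \text{tr}\,Y_4 = p - \tfrac{3r}{2} + \tfrac{3s}{2} - (n-p) = 2p-n + \tfrac{3(s-r)}{2}$, and comparing with (\ref{eqs11a}) immediately yields $s = r$.

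Next I would introduce the refined partition of the off-diagonal blocks. Write
\[
\hat Y_2 = \begin{pmatrix} F_1 & F_2 \\ G_1 & G_2 \end{pmatrix}, \qquad \hat Y_3 = \begin{pmatrix} H_1 & K_1 \\ H_2 & K_2 \end{pmatrix},
\]
where the rows of $\hat Y_2$ (and columns of $\hat Y_3$) are split as $r \mid p-r$ according to $D_1$, and the columns of $\hat Y_2$ (and rows of $\hat Y_3$) are split as $r \mid n-p-r$ according to $D_4$. Then (\ref{C2C3}) (applied to $B_2 = (F_1 \ F_2)$, $C_2 = (G_1 \ G_2)$, etc.) forces $G_1 = G_2 = 0$ and $K_1 = K_2 = 0$, plus the relation $F_1 H_1 + F_2 H_2 = \tfrac{3}{4}I_r$ for the surviving blocks.

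The key new input is the analogue of (\ref{C2C3}) on the $\hat Y_4$ side, which I would extract from (\ref{new_eqs13}): since $I_{n-p} + D_4 = \text{diag}(\tfrac{3}{2}I_r, 0)$, expanding $2 \hat Y_3 \hat Y_2$ in the chosen partition produces the four block equations $2H_1 F_1 = \tfrac{3}{2} I_r$, $H_1 F_2 = 0$, $H_2 F_1 = 0$, $H_2 F_2 = 0$. The first gives $F_1$ invertible and $H_1 = \tfrac{3}{4} F_1^{-1}$; then the second forces $F_2 = 0$ and the third forces $H_2 = 0$ (the fourth becomes automatic). Substituting back into $\hat Y$ yields exactly the matrix displayed in~(\ref{solution22}).

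Finally, for the converse I would verify that any $\hat Y$ of the form~(\ref{solution22}) with $F_1$ nonsingular satisfies (\ref{eqs11})--(\ref{eqs13}) (and hence, by Theorem~\ref{TheoremA}, lies in $\mathcal{S}_n(D)$); this is a direct block computation using $F_1 (\tfrac{3}{4} F_1^{-1}) = \tfrac{3}{4} I_r$ and the decoupled diagonal blocks on the $I_{p-r}$ and $-I_{n-p-r}$ pieces, which already satisfy (\ref{eqs11}) trivially. I do not anticipate a real obstacle here: the only delicate point is maintaining consistent row/column partitions of $\hat Y_2, \hat Y_3$ so that the identities from Lemma~\ref{twier2a} can be read off blockwise, and making sure the fourth block equation $H_2 F_2 = 0$ is implied rather than imposed separately.
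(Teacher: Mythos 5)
Your proposal is correct and follows essentially the same route as the paper's proof: the identical trace computation forcing $s=r$, followed by the blockwise exploitation of (\ref{C2C3}) and (\ref{new_eqs13}) under the refined $r \mid p-r$ and $r \mid n-p-r$ partitions (your $F_i, H_i$ are just the paper's $G_i, F_i$ under different names). Your version is in fact more explicit than the paper's, which compresses the four block equations from (\ref{new_eqs13}) and the converse verification into a single sentence.
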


\begin{proof}
Here $ \hat Y_1=D_1$ and $\hat Y_4=D_4$, where $D_1$ is defined by (\ref{D1}), and $D_4$ is given in (\ref{D4}).
Then $tr Y_1 +tr Y_4=  tr \hat Y_1 +tr \hat Y_4 = -\textstyle \frac{r}{2}+(p-r)+\textstyle \frac{s}{2}  -(n-p-s)$, so by (\ref{eqs11a}) we get $s=r$. 

We use  (\ref{hatY23}) and partition $B_2$ and  $B_3$ as follows
 \begin{equation}\label{newB2B3}
B_3=\left(
\begin{array}{c}
 F_1  \\
 F_2  \\
\end{array}\right),  \quad F_1(r \times r), 
\quad 
B_2=(G_1, G_2), \quad G_1(r \times r).
\end{equation}

Then using  (\ref{C2C3}) and     (\ref{new_eqs13}) we conclude that $G_1=\textstyle \frac{3}{4}  F_1^{-1}$, $G_2=0$ and $F_2=0$.
\end{proof} 

\medskip

\section{Algorithms}

For given involutory matrix $A \in  \mathbb C^{n \times n}$ and given matrices $P, D$ given by (\ref{eqs1}), we are able to compute concrete involutory solutions $X$ of the equation $AXA=XAX$ as $X=PYP^{-1}$, where $Y$ is a solution of the equation $YDY$.

In order to help readers to implement our methods,  we include the algorithms for finding such solutions $Y$. We omit the trivial case where $X=A$  (i.e. $Y=D$).

\noindent
{\bf Algorithm 1.} {\em Construction of  $Y \in  \mathcal{S}_{2p}(D)$, using Theorem \ref{Twier4}.}

\medskip

Take any natural number $p$ and arbitrary nonsingular  matrix $Y_2 \in \mathbb C^{p \times p}$.

The algorithm is determined now by two steps:
\begin{itemize}
\item compute  $Y_2^{-1}$,
\item form $Y \in \mathbb C^{2p \times 2p}$ as follows:
\[
Y=\left(
\begin{array}{cc}
 -\textstyle\frac{1}{2} I_p & Y_2\\
\textstyle\frac{3}{4} Y_2^{-1}  & \textstyle\frac{1}{2} I_p\\
\end{array}\right).
\]
\end{itemize}

\medskip

\noindent

{\bf Algorithm 2.} {\em Construction of  $Y \in  \mathcal{S}_{n}(D)$, using Theorem \ref{Theorem_i} }

\medskip

Take any natural numbers $n$ and $p$ such that $1 \leq n-p < p$, and arbitrary nonsingular  matrices $P_1 \in \mathbb C^{p \times p}$,
$B_2 \in \mathbb C^{(n-p) \times (n-p)}$.  

The algorithm splits into the following steps:
\begin{itemize}
\item compute $P_1^{-1}$ and $B_2^{-1}$,
\item  $Y_1=P_1 \text{diag}(\textstyle -\frac{1}{2} I_{n-p}, I_{2p-n})P_1^{-1}$,
\item $Y_2=P_1 \left(
\begin{array}{c}
B_2\\
0\\
\end{array}\right)$,
\item  $Y_3=(\textstyle\frac{3}{4} B_2^{-1},0) P_1^{-1}$,
\item form $Y \in \mathbb C^{n \times n}$:
\[
Y=\left(
\begin{array}{cc}
 Y_1 & Y_2\\
Y_3 & \textstyle\frac{1}{2} I_{n-p}.
\end{array}\right).
\]
\end{itemize}

\medskip

\noindent
{\bf Algorithm 3.} {\em Construction of  $Y \in  \mathcal{S}_{n}(D)$, using Theorem \ref{Theorem_ii}

\medskip

Take any natural numbers $n, p, r$ such that $1 \leq r < n-p \leq p$, and arbitrary nonsingular  matrices $P_1 \in \mathbb C^{p \times p}$,
$P_4 \in \mathbb C^{(n-p) \times (n-p)}$, and $F_1 \in \mathbb C^{r \times r}$.  

The algorithm consists with  the following steps:
\begin{itemize}
\item compute $P_1^{-1}$, $P_4^{-1}$ and $F_1^{-1}$,
\item $Y_1=P_1 \text{diag}(\textstyle -\frac{1}{2} I_{r}, I_{p-r})P_1^{-1}$,
\item $Y_2=P_1 \text{diag}(F_1,0) P_4^{-1}$,
\item $Y_3=P_4 (\textstyle\frac{3}{4} F_1^{-1},0) P_1^{-1}$,
\item $Y_4=P_4 \text{diag}(\textstyle \frac{1}{2} I_{r}, -I_{n-p-r})P_4^{-1}$,
\item form $Y \in \mathbb C^{n \times n}$ as follows:
\[
Y=\left(
\begin{array}{cc}
 Y_1 & Y_2\\
Y_3 & Y_4.
\end{array}\right).
\]
\end{itemize}

\section{Concrete examples}

This section contains a few examples to illustrate our theoretical results.

\begin{example}\label{Example1}
Let $D=\text{diag}(1,-1)$. Then from Theorems \ref{twier3} and \ref{Twier4} it follows that
$\mathcal{S}_2(A)=\{D\} \cup \mathcal{K}$, where
\[K= \left \{ \left(
\begin{array}{cc}
-\textstyle\frac{1}{2}  &  t  \\
\textstyle\frac{3}{4t} & \textstyle\frac{1}{2}
\end{array}
\right):  0 \neq t \in  \mathbb C \right \}.
\]

Notice that the set $K$ is uniquely determined by parameter $t$. Moreover, we have  $\{D\} \cap K =\emptyset$.
\end{example}

\medskip

\begin{example}\label{Example2}
Let $D=\text{diag}(1,1,-1)$. Here $n=3$ and $p=2$. We prove that  $\mathcal{S}_3(D)= \mathcal{K}_1 \cup \mathcal{K}_2  \cup \mathcal{K}_3$, where
$\mathcal{K}_1=\{D\}$ and $ \mathcal{K}_i  \cap  \mathcal{K}_j =\emptyset$ for $i \neq j$. 

It is obvious  that $Y \in \mathcal{S}_3(D)$ iff
$Y=D$ or $Y=W \hat Y W^{-1}$, where $W=\text{diag}(P_1,1)$, and $P_1(2 \times 2)$ is an arbitrary nonsingular matrix. By (\ref{Solution2}),  we get
\[
\hat Y=\left (\begin{array}{ccc}
-\textstyle\frac{1}{2}  &  0 &  t  \\
0 & 1 & 0\\
\textstyle\frac{3}{4t} & 0 &  \textstyle\frac{1}{2}\\
\end{array}
\right), \quad 0 \neq t \in  \mathbb C.
\]

Without loss of generality we can assume that $det P_1=1$, i.e. $P_1=\left (\begin{array}{cc}
a & b \\
c & d
\end{array}
\right)$, where $ab-cd=1$. 

Notice that for such $W=\text{diag}(P_1,1)$ we get 
 \[
Y=W \hat Y W^{-1}=
\left (\begin{array}{ccc}
-\textstyle\frac{(ad+2bc)}{2} & \textstyle\frac{3ab}{2}    & a t  \\
-\textstyle\frac{3cd}{2} & \textstyle\frac{(bc+2ad)}{2} & ct \\
\textstyle\frac{3d}{4t} &  -\textstyle\frac{3b}{4t} &  \textstyle\frac{1}{2}\\
\end{array}
\right).
\]

Case (i): $a=0$. Then $det P_1=-bc=1$, hence $b \neq 0$ and $c \neq 0$. Let $u=ct$ and $t_2=ct$. Then  $Y \in   \mathcal{K}_2$ iff  
\[Y=Y(u,t_2)=
\left (\begin{array}{ccc}
1  & 0   & 0 \\
-\textstyle\frac{3u}{2} & -\textstyle\frac{1}{2} & t_2\\
\textstyle\frac{3u}{4 t_2} &  \textstyle\frac{3}{4 t_2} &  \textstyle\frac{1}{2}\\
\end{array}
\right), \quad  u \neq 0, t_2 \neq 0.
\]

We see that the set $\mathcal{K}_2$   is uniquely determined by parameters $u$ and $t_2$.

\noindent

Case (ii): $a \neq 0$. This together with  $ad-bc=1$ leads to $ad=1+bc$. Define $t_3=at, b_2=ab, c_2=c/a$. Then $Y \in   \mathcal{K}_3$ iff  
\[
Y=Y(b_2,c_2,t_3)=
\left (\begin{array}{ccc}
-\textstyle\frac{1}{2} - \textstyle\frac{3b_2c_2}{2}& \textstyle\frac{3b_2}{2}    & t_3 \\
-\textstyle\frac{3c_2 (1+b_2c_2)}{2} & 1+ \textstyle\frac{3b_2c_2}{2} & c_2 t_3 \\
\textstyle\frac{3(1+b_2c_2)}{4 t_3} &  -\textstyle\frac{3b_2}{4t_3} &  \textstyle\frac{1}{2}\\
\end{array}
\right), \quad t_3 \neq 0.
\]

Notice that the set  $\mathcal{K}_3$   is uniquely determined by parameters $b_2, c_2, t_3$.
\end{example}
\medskip

\section{Conclusions}

\begin{itemize}
\item We characterized all explicit involutive  solutions of the Yang-Baxter-like equation $AXA=XAX$.
\item There are infinitely many such solutions for $A \neq \pm I_n$.
\item Constructing involutive solutions of $AXA=XAX$ can be done by direct implementation of Algoritms 1--3.
\end{itemize}

\medskip

\end{document}